\title{Kernel Identities and Vectorial Regularization}
\author{Christian Bargetz\footnote{E-mail: christian.bargetz@uibk.ac.at}\;}
\author{Norbert Ortner}
\affil{Institut für Mathematik, Universität Innsbruck,\\ Technikerstraße 13, 6020 Innsbruck, Austria}
\newtheorem{proposition}{Proposition}
\newtheorem{lemma}{Lemma}
\theoremstyle{remark}
\newtheorem{remark}{Remark}
\begin{document}
\maketitle

\begin{abstract}
  {\noindent\textbf{\textsf{Abstract.}}}
  We present the method of ``vectorial regularization'' to prove kernel identities. This method
  is applied to derive both known kernel identities, e.g. $\dot{\mathcal{B}}_{xy}=\dot{\mathcal{B}}_x
  \widehat{\otimes}_\varepsilon\dot{\mathcal{B}}_y$, $\mathcal{D}'_{L^1,xy}=\mathcal{D}'_{L^1,x}
  \widehat{\otimes}_\pi\mathcal{D}'_{L^1,y}$, as well as new ones: $\dot{\mathcal{B}}'_{xy}=
  \dot{\mathcal{B}}'_x\widehat{\otimes}_\varepsilon\dot{\mathcal{B}}'_y$ and $\mathcal{D}_{L^1,xy}=
  \mathcal{D}_{L^1,x}\widehat{\otimes}_\pi\mathcal{D}_{L^1,y}$.\\[3mm]
  {\noindent\textbf{\textsf{Mathematics Subject Classifications.}}} 
  46F05, 46F10\\[1mm]
  {\noindent\textbf{\textsf{Keywords.}}} 
  vector-valued distributions, regularization of distributions
\end{abstract}

\section{Introduction}
In the following all distribution spaces are defined on the whole of $\mathbb{R}^{n}$, i.e., 
$\mathcal{D}'=\mathcal{D}'(\mathbb{R}^{n})$, $\mathcal{D}'_{L^p}=\mathcal{D}'_{L^p}(\mathbb{R}^{n})$,
etc.

A \emph{regularization property} for a distribution $T\in\mathcal{D}'$ is a statement of the following
form:
Let $1\leq p\leq \infty$. For $T\in\mathcal{D}'$ the assertions
\begin{enumerate}
  \item $T\in\mathcal{D}'_{L^p}$
  \item $\forall \varphi\in\mathcal{D}\colon \varphi*T\in L^p$
\end{enumerate}
are equivalent. By means of the ``associated difference kernel'' 
\[
T(x-y)\in\mathcal{D}'_{xy}=\mathcal{D}'(\mathbb{R}^{n}_x\times \mathbb{R}^{n}_y)
\]
the equivalence above can be translated into the equivalence 
\[
T\in\mathcal{D}'_{L^p} \Leftrightarrow T(x-y)\in\mathcal{D}'_y\widehat{\otimes}L^p_x.
\]
The ``associated difference kernel'' $T(x-y)$ is defined in~\cite[pp.~103--104]{Sch57:AIF1}.

A \emph{vectorial regularization property} reads as: Let $E$ be a space of distributions and
$K(x,z)\in\mathcal{D}'_x(E_z)$. Then,
\[
K(x,z)\in\mathcal{D}'_{L^p,x}(E_z)
\Leftrightarrow K(x-y,z)\in(\mathcal{D}'_y\widehat{\otimes}L^p_x)(E_z).
\]
The space $\mathcal{D}'_x(E_z)$ of $E$-valued distributions is defined as the subspace 
$\mathcal{D}'_x\;\varepsilon\;E_z$ of $\mathcal{D}'_{xz}$ wherein the $\varepsilon$-product is defined
in~\cite[p.~18]{Sch57:AIF1}. Note that by Corollaire~1 in~\cite[p.~47]{Sch57:AIF1}
$\mathcal{D}'\;\varepsilon\;E=\mathcal{D}'\widehat{\otimes}_\varepsilon E$ if $E$ is complete.

The symbol $\otimes$ without subscript is used if $\otimes_\pi=\otimes_\varepsilon$, e.g., if one of the
spaces is nuclear.

If $\mathcal{D}'_{L^p}\widehat{\otimes}_\pi E$ is used instead of $\mathcal{D}'_{L^p}\
\widehat{\otimes}_\varepsilon E$ we speak of vectorial regularization with the completed 
\emph{projective} tensor product.

By \emph{kernel identities}, we understand statements as e.g. L.~Schwartz's classical kernel theorem,
i.e., $\mathcal{D}'_{xy}=\mathcal{D}'_x\widehat{\otimes}\mathcal{D}'_y$. Two fundamental examples of
kernel identities are given in~\cite[chap.~I, pp.~61, 90]{Gro55:ProdTens}:
\begin{equation}\label{eq:Grothendieck}
L^{1}(X)\widehat{\otimes}_\pi L^{1}(Y) = L^{1}(X\times Y) \qquad\text{and}\qquad
\mathcal{C}_0(X)\widehat{\otimes}_\varepsilon\mathcal{C}_0(Y) = \mathcal{C}_0(X\times Y),
\end{equation}
where $X$ and $Y$ are locally compact spaces. In order to abbreviate the notation, we will write these
identities for $X=\mathbb{R}^{n}$ and $Y=\mathbb{R}^{m}$ as
\[
L^1_x\widehat{\otimes}_\pi L^1_y = L^1_{xy} \qquad\text{and}\qquad
\mathcal{C}_{0,x}\widehat{\otimes}_\varepsilon\mathcal{C}_{0,y} = \mathcal{C}_{0,xy}.
\]
In~\cite{Sch57:AIF1}, L.~Schwartz found the algebraic and topological kernel identities
\begin{equation}\label{eq:IntegrableDist}
  \mathcal{D}'_{L^1,x}\widehat{\otimes}_\pi\mathcal{D}'_{L^1,y}=\mathcal{D}'_{L^1,xy}
\end{equation}
for integrable distributions (Proposition~38 in~\cite[p.~135]{Sch57:AIF1}) and
\begin{equation}\label{eq:Bdot}
  \dot{\mathcal{B}}_x\widehat{\otimes}_\varepsilon \dot{\mathcal{B}}_y = \dot{\mathcal{B}}_{xy}
\end{equation}
for smooth functions with derivatives vanishing at infinity (Proposition~27 
in~\cite[p.~59]{Sch57:AIF1}). Further kernel identities are given in Proposition~28 
in~\cite[p.~98]{Sch57:AIF1},
\begin{equation}\label{eq:KernelIdentites}
  \begin{gathered}
    \mathcal{E}_{xy}=\mathcal{E}_x\widehat{\otimes}\mathcal{E}_y,\qquad
    \mathcal{S}_{xy}=\mathcal{S}_x\widehat{\otimes}\mathcal{S}_y,\qquad
    \mathcal{O}_{M,xy}=\mathcal{O}_{M,x}\widehat{\otimes}\mathcal{O}_{M,y},\\
    \mathcal{E}'_{xy}=\mathcal{E}'_{x}\widehat{\otimes}\mathcal{E}'_y,\qquad
    \mathcal{S}'_{xy}=\mathcal{S}'_x\widehat{\otimes}\mathcal{S}'_y
  \end{gathered}
\end{equation}
Note that however,
\[
\mathcal{D}_{xy}=\mathcal{D}_x\widehat{\otimes}_\iota\mathcal{D}_y \qquad\text{and}\qquad
\mathcal{O}_{C,xy}=\mathcal{O}_{C,x}\widehat{\otimes}_\iota\mathcal{O}_{C,y}
\]
by Proposition~1 bis. in~\cite[p.~17]{Sch58:AIF2} and \cite[p.~34]{BO13:Comparision}, respectively. 
Here $\otimes_\iota$ denotes the inductive tensor product defined in Definition~3 
in~\cite[Chap.~I, p.~74]{Gro55:ProdTens} and $\widehat{\otimes}_\iota$ its completion.

The aim of this article is to prove, in a uniform manner, known and new kernel identities, 
as~\eqref{eq:IntegrableDist} in Proposition~\ref{prop:IntDist}, \eqref{eq:Bdot} in 
Proposition~\ref{prop:Bd}, 
\begin{equation}\label{eq:IntFunct}
  \mathcal{D}_{L^1,xy}=\mathcal{D}_{L^1,x}\widehat{\otimes}_\pi\mathcal{D}_{L^1,y}
\end{equation}
in Proposition~\ref{prop:IntFunct} and
\begin{equation}\label{eq:Bdp}
\dot{\mathcal{B}}'_{xy}=\dot{\mathcal{B}}'\widehat{\otimes}_\varepsilon\dot{\mathcal{B}}'_{y}
\end{equation}
in Proposition~\ref{prop:Bdp} by vectorial regularization properties. Our proofs of the identities
(\ref{eq:IntegrableDist}, \ref{eq:Bdot}, \ref{eq:IntFunct}, \ref{eq:Bdp}) show that they are
all consequences of Grothendieck's fundamental examples~\eqref{eq:Grothendieck}.

Also it turns out that in some cases the topological part of the kernel identities follows from
the algebraic identity and abstract structural results, e.g. for complete spaces of distributions 
$\mathcal{H}$ the continuous embeddings
\[
\mathcal{H}_x\widehat{\otimes}_\varepsilon\mathcal{H}_y\hookrightarrow 
\mathcal{D}'_x\widehat{\otimes}_\varepsilon\mathcal{D}'_y = \mathcal{D}'_{xy} \hookleftarrow\mathcal{H}_{xy}
\]
impliy that the identity mapping $\mathcal{H}_{xy}\to\mathcal{H}_x\widehat{\otimes}_\varepsilon
\mathcal{H}_y$ has a closed graph. In concrete cases, sequence-space representations can be used
to check whether these spaces satisfy the assumptions of a suitable closed graph theorem.

We use the notations of L.~Schwartz in~\cite{Sch66:ThD}, e.g. the space of distributions 
$\mathcal{E}$, $\mathcal{E}'$, $\mathcal{D}$, $\mathcal{D}'$, $\mathcal{D}_{L^p}$, $\mathcal{D}'_{L^p}$
for $1\leq p \leq\infty$, $\dot{\mathcal{B}}$ and $\dot{\mathcal{B}}'$ (which is not the dual of
$\dot{\mathcal{B}}$ but the closure of $\mathcal{E}'$ in $\mathcal{D}'_{L^\infty}$). For 
\emph{vector-valued distributions}, constant use is made of L.~Schwartz' 
treatise~\cite{Sch57:AIF1, Sch58:AIF2}. Instead of $K(\hat{x},\hat{y})$ we simply write $K(x,y)$ for 
kernels $K(x,y)\in\mathcal{D}'_{xy}$.

Proposition~\ref{prop:Bdp} was presented in a talk given by the second author in Vienna, June, 2015.

\section{Regularization and the injective tensor product}
\begin{proposition}\label{prop:RegEps}
  Let $E$ be a complete space of distributions. For a kernel $K(x,z)\in\mathcal{D}'_{xz}$ the 
  following characterizations hold:
  \begin{enumerate}
   \item  $K(x,z)\in \dot{\mathcal{B}}'_{x}\widehat{\otimes}_{\varepsilon}E_z \Leftrightarrow
     K(x-y,z)\in (\mathcal{D}'_{y}\widehat{\otimes}\mathcal{C}_{0,x})\widehat{\otimes}_\varepsilon E_z$.
   \item  $K(x,z)\in \dot{\mathcal{B}}_{x}\widehat{\otimes}_{\varepsilon}E_z \Leftrightarrow
     K(x-y,z)\in(\mathcal{E}_{y}\widehat{\otimes}\mathcal{C}_{0,x})\widehat{\otimes}_\varepsilon E_z$.
  \end{enumerate}
\end{proposition}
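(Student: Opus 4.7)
The strategy is to reduce the vectorial equivalences in (1) and (2) to the corresponding scalar regularization properties for $\dot{\mathcal{B}}'$ and $\dot{\mathcal{B}}$, and then transfer them to $E$-valued kernels via the functoriality of the $\varepsilon$-product with the complete space $E_z$.

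First I would establish the scalar versions
\[
T(x)\in \dot{\mathcal{B}}'_x \Longleftrightarrow T(x-y)\in \mathcal{D}'_y \widehat{\otimes} \mathcal{C}_{0,x},\qquad T(x)\in \dot{\mathcal{B}}_x \Longleftrightarrow T(x-y)\in \mathcal{E}_y \widehat{\otimes} \mathcal{C}_{0,x}.
\]
These follow from the known characterizations $T\in\dot{\mathcal{B}}'\Leftrightarrow \varphi\ast T\in\mathcal{C}_0$ for every $\varphi\in\mathcal{D}$, and $T\in\dot{\mathcal{B}}\Leftrightarrow \mu\ast T\in\mathcal{C}_0$ for every $\mu\in\mathcal{E}'$, together with the identifications $\mathcal{D}'_y\widehat{\otimes}\mathcal{C}_{0,x}\cong L_b(\mathcal{D}_y,\mathcal{C}_{0,x})$ and $\mathcal{E}_y\widehat{\otimes}\mathcal{C}_{0,x}\cong L_b(\mathcal{E}'_y,\mathcal{C}_{0,x})$ valid by the nuclearity of $\mathcal{D}$ and $\mathcal{E}$. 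Under these identifications the associated difference kernel $T(x-y)$ corresponds to the convolution operator $\varphi\mapsto \check{T}\ast\varphi$, so the equivalences say precisely that the translation map $\tau\colon T\mapsto T(x-y)$ is a topological isomorphism of $\dot{\mathcal{B}}'_x$ (resp.\ $\dot{\mathcal{B}}_x$) onto its closed image in $\mathcal{D}'_y\widehat{\otimes}\mathcal{C}_{0,x}$ (resp.\ $\mathcal{E}_y\widehat{\otimes}\mathcal{C}_{0,x}$), consisting exactly of the translation-invariant kernels.

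To upgrade to the $E$-valued case, I would take the $\varepsilon$-product with $E_z$. Since $E$ is complete, $F\widehat{\otimes}_\varepsilon E_z=F\,\varepsilon\,E_z$ for the spaces $F$ in play (via Cor.~1 of \cite[p.~47]{Sch57:AIF1}), and this functor sends topological embeddings of complete spaces to topological embeddings. Applied to $\tau$, it yields the forward directions ``$\Rightarrow$'' in (1) and (2) at once. For the reverse directions I would use the realization $F\widehat{\otimes}_\varepsilon E_z=L_e(E'_z;F)$: an element of the right-hand side in (1) is a continuous linear map $E'_z\to\mathcal{D}'_y\widehat{\otimes}\mathcal{C}_{0,x}$ whose image, by hypothesis, lies in the closed subspace of translation-invariant kernels; the scalar equivalence then factors it through $\dot{\mathcal{B}}'_x$, yielding the desired element of $\dot{\mathcal{B}}'_x\widehat{\otimes}_\varepsilon E_z$. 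The argument for (2) is identical with $\dot{\mathcal{B}}$ and $\mathcal{E}_y$ replacing $\dot{\mathcal{B}}'$ and $\mathcal{D}'_y$.

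The main obstacle will be the rigorous justification of this ``slicewise'' argument in the reverse direction: one must verify that $\widehat{\otimes}_\varepsilon E_z$ really does preserve the closed subspace of translation-invariant kernels, i.e.\ that an element of $(\mathcal{D}'_y\widehat{\otimes}\mathcal{C}_{0,x})\widehat{\otimes}_\varepsilon E_z$ whose every slice in $E'_z$ is translation-invariant comes from $\dot{\mathcal{B}}'_x\widehat{\otimes}_\varepsilon E_z$ continuously. This hinges on the topological (rather than merely algebraic) content of the scalar equivalence, together with the preservation of topological subspaces under the $\varepsilon$-product with a complete space.
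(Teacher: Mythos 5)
Your forward implications coincide with the paper's: there the map $\tau\colon \dot{\mathcal{B}}'\to\mathcal{D}'_y\widehat{\otimes}\mathcal{C}_{0,x}$, $S\mapsto S(x-y)$, is continuous and one applies $\tau\,\varepsilon\,\mathrm{id}_E$. The problem is your reverse direction. You reduce it to two claims: (a) $\tau$ is a topological isomorphism of $\dot{\mathcal{B}}'_x$ onto a \emph{closed topological subspace} of $\mathcal{D}'_y\widehat{\otimes}\mathcal{C}_{0,x}$ (likewise for $\dot{\mathcal{B}}$ and $\mathcal{E}_y\widehat{\otimes}\mathcal{C}_{0,x}$), and (b) the $\varepsilon$-product with $E_z$ detects membership in such a subspace slicewise. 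Claim (a) is precisely the hard topological content and you do not prove it: continuity and injectivity of $\tau$ are easy, but openness onto its image is not --- neither $\dot{\mathcal{B}}'$ nor $\mathcal{D}'_y\widehat{\otimes}\mathcal{C}_{0,x}$ is metrizable and the latter is not a (DF)-space, so no off-the-shelf open-mapping or closed-graph theorem applies. Without (a), the factored map $E'_z\to\dot{\mathcal{B}}'_x$ produced by your slicewise argument need not be continuous, let alone define an element of $\dot{\mathcal{B}}'_x\widehat{\otimes}_\varepsilon E_z$. You explicitly name (b) as ``the main obstacle'' and then leave both (a) and (b) unresolved, so the proof is incomplete exactly where the work lies.

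The paper's proof of ``$\Leftarrow$'' takes a different route that bypasses (a) and (b) entirely. For part~1 it multiplies $K(x-y,z)\in\mathcal{D}'_y(\mathcal{C}_{0,x}\widehat{\otimes}_\varepsilon E_z)$ by the kernel $\delta(w-y)\in\mathcal{D}_y\widehat{\otimes}\mathcal{D}'_w$ (Schwartz's multiplication of vector-valued distributions), uses the embedding $\mathcal{C}_{0,x}\widehat{\otimes}_\varepsilon\mathcal{E}'_y\hookrightarrow\dot{\mathcal{B}}'_{xy}$ together with the invariance of $\dot{\mathcal{B}}'_{xy}$ under the shear $(x,y)\mapsto(x-y,y)$, then evaluates against $\mathrm{e}^{-|v|^2}\in\mathcal{S}_v$ and divides by $\mathrm{e}^{-|w|^2}$ to recover $K(u,z)\in\dot{\mathcal{B}}'_u\widehat{\otimes}_\varepsilon E_z$. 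For part~2 it simply pairs $K(x-y,z)\in\mathcal{E}_y(\mathcal{C}_{0,x}\widehat{\otimes}_\varepsilon E_z)$ with $\partial^\alpha\delta(y)$ to obtain $\partial^\alpha_xK(x,z)\in\mathcal{C}_{0,x}\widehat{\otimes}_\varepsilon E_z$ for all $\alpha$, which characterizes $\dot{\mathcal{B}}_x\widehat{\otimes}_\varepsilon E_z$. If you want to rescue your functorial approach you would have to prove (a), which essentially amounts to redoing such a computation in the scalar case, and you would still owe a precise reference or proof for (b); the direct vector-valued argument is the shorter path.
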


Although for a normal space of distributions $E$ the characterization~1 of this Proposition is a 
special case of Proposition~15 in~\cite{BNO2015:Regulraization}, we include it nevertheless to keep 
the article self-contained.

\begin{proof}
  \begin{enumerate}
  \item We first show the case of distributions.
    \begin{itemize}
    \item[$\Rightarrow$:] The mapping 
      \[
      \tau\colon\dot{\mathcal{B}}'\to\mathcal{D}'_y\widehat{\otimes}\mathcal{C}_{0,x},S\mapsto S(x-y)
      \]
      is well-defined, linear and continuous according to Remarque~3 
      in~\cite[p.~202]{Sch66:ThD}. Hence also the mapping
      \[
      \tau\,\varepsilon\,{\rm id}_{E}\colon \dot{\mathcal{B}}'_x\widehat{\otimes}_\varepsilon E_z \to
      (\mathcal{D}'_{y}\widehat{\otimes}\mathcal{C}_{0,x})\widehat{\otimes}_\varepsilon E_z,
      K(x,z)\mapsto K(x-y,z)
      \]
      as by Proposition~1 in~\cite[p.~20]{Sch57:AIF1} the $\varepsilon$-product of continuous linear
      mappings is again continuous.
    \item[$\Leftarrow$:] Multiplication of $K(x-y,z)\in\mathcal{D}'_y(\mathcal{C}_{0,x}
      \widehat{\otimes}_\varepsilon E_z)$ with $\delta(w-y)\in\mathcal{D}_y\widehat{\otimes}
      \mathcal{D}'_w$ using Proposition~25 in~\cite[p.~120]{Sch57:AIF1} leads to
      \[
      \delta(w-y)K(x-y,z)\in\mathcal{E}'_y(\mathcal{D}'_w\widehat{\otimes}(\dot{\mathcal{B}}_x
      \widehat{\otimes}_\varepsilon E_z))=(\mathcal{E}'_y\widehat{\otimes}_\varepsilon \mathcal{C}_{0,x})
      \widehat{\otimes}_\varepsilon (\mathcal{D}'_w\widehat{\otimes}_\varepsilon E_z).
      \]
      From $\mathcal{C}_{0,x}\widehat{\otimes}_\varepsilon\mathcal{E}'_y\hookrightarrow 
      \dot{\mathcal{B}}'_{xy}$ and the invariance of $\dot{\mathcal{B}}'_{x,y}$ under the coordinate 
      transform
      \begin{align*}
        x-y&=u & x&=u+v\\
        y&=v & y&= v
      \end{align*}
      we deduce
      \[
      \delta(v-w)K(u,z)\in\dot{\mathcal{B}}'_{uv}\widehat{\otimes}_\varepsilon\mathcal{D}'_w
      \widehat{\otimes}_\varepsilon E_z \subset (\mathcal{S}'_v\widehat{\otimes}\dot{\mathcal{B}}'_u)
      (\mathcal{D}'_w\widehat{\otimes}_\varepsilon E_z).
      \]
      Evaluation with ${\rm e}^{-|v|^2}\in\mathcal{S}_v$ yields
      \[
      {\rm e}^{-|w|^2}K(u,z)\in \dot{\mathcal{B}}'_u\widehat{\otimes}_\varepsilon\mathcal{D}'_w
      \widehat{\otimes}_\varepsilon E_z
      =\mathcal{D}'_w(\dot{\mathcal{B}}'_u\widehat{\otimes}_\varepsilon E_z).
      \]
      Multiplication by ${\rm e}^{|w|^2}\in\mathcal{E}_w$, which is possible by Theorem~7.1 
      in~\cite{Sch57:Tata} leads to
      \[
      K(u,z)\in \mathcal{D}'_w(\dot{\mathcal{B}}'\widehat{\otimes}_\varepsilon E_z)
      \]
      and hence
      \[
      K(u,z)\in\dot{\mathcal{B}}'_u\widehat{\otimes}_\varepsilon E_z.
      \]
    \end{itemize}
  \item The implication ``$\Rightarrow$'' is completely analogous to the case of distributions if we
    use that the convolution mapping $*\colon\mathcal{E}'\times\dot{\mathcal{B}}\to \mathcal{C}_0$
    is well defined and hypocontinuous since $\mathcal{E}'\hookrightarrow\mathcal{D}'_{L^1}$ 
    and $\dot{\mathcal{B}}\hookrightarrow\mathcal{C}_0$ (see e.g.~\cite{Lar2013:Multipliers}).\\
    Let us show the implication ``$\Leftarrow$''. The vectorial scalar product of
    \[
    K(x-y,z)\in \mathcal{E}_y(\mathcal{C}_{0,x}\widehat{\otimes}_\varepsilon E_z)
    \]
    with $\partial^\alpha\delta(y)\in\mathcal{E}'_y$ yields $\partial^\alpha_xK(x,z)\in\mathcal{C}_{0,x}
    \widehat{\otimes}_\varepsilon E_z$ for all $\alpha\in\mathbb{N}_0^{n}$. From this we deduce 
    $K(x,z)\in\dot{\mathcal{B}}_{x}\widehat{\otimes}_\varepsilon E_z$ using the compatibility of the
    vector-valued scalar product with continuous linear mappings by~\cite[p.~18]{Sch58:AIF2}.
  \end{enumerate}
\end{proof}

\begin{remark}
  Note that it is possible to generalize this result to non-complete spaces of distributions but in
  this case the completed $\varepsilon$-tensor product has to be replaced by the 
  $\varepsilon$-product.
\end{remark}

\begin{proposition}[{see Proposition~17 in~\cite{Sch57:AIF1}}]\label{prop:Bd}
  The space of smooth functions vanishing at infinity satisfies the kernel identity
  \[
  \dot{\mathcal{B}}_{xy}=\dot{\mathcal{B}}_x\widehat{\otimes}_\varepsilon\dot{\mathcal{B}}_y
  \]
  algebraically and topologically.
\end{proposition}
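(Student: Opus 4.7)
The plan is to derive the identity from the fundamental examples~\eqref{eq:Grothendieck} and the kernel identity $\mathcal{E}_{uv}=\mathcal{E}_u\widehat{\otimes}\mathcal{E}_v$ from~\eqref{eq:KernelIdentites} by iterating the vectorial regularization characterization of Proposition~\ref{prop:RegEps}(2), peeling off one $\dot{\mathcal{B}}$-factor at a time, and then applying its scalar version to merge the two factors back into $\dot{\mathcal{B}}_{xy}$. Since $\dot{\mathcal{B}}$ is a Fréchet space, all completeness hypotheses needed along the way are automatic.

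Concretely, Proposition~\ref{prop:RegEps}(2) applied with $E_z=\dot{\mathcal{B}}_y$ and translation variable $u$ for $x$ gives
\[
K(x,y)\in\dot{\mathcal{B}}_x\widehat{\otimes}_\varepsilon\dot{\mathcal{B}}_y\iff K(x-u,y)\in(\mathcal{E}_u\widehat{\otimes}\mathcal{C}_{0,x})\widehat{\otimes}_\varepsilon\dot{\mathcal{B}}_y.
\]
A second application, now peeling off the $y$-factor with passive space $E=\mathcal{E}_u\widehat{\otimes}\mathcal{C}_{0,x}$ (complete as the $\widehat{\otimes}$-product of two complete spaces) and translation variable $v$ for $y$, yields equivalence with
\[
K(x-u,y-v)\in(\mathcal{E}_u\widehat{\otimes}\mathcal{C}_{0,x})\widehat{\otimes}_\varepsilon(\mathcal{E}_v\widehat{\otimes}\mathcal{C}_{0,y}).
\]
Nuclearity of $\mathcal{E}$ (so $\widehat{\otimes}_\varepsilon=\widehat{\otimes}_\pi$) together with associativity and commutativity of $\widehat{\otimes}_\varepsilon$ allows me to regroup this as $(\mathcal{E}_u\widehat{\otimes}\mathcal{E}_v)\widehat{\otimes}(\mathcal{C}_{0,x}\widehat{\otimes}_\varepsilon\mathcal{C}_{0,y})$; substituting $\mathcal{E}_u\widehat{\otimes}\mathcal{E}_v=\mathcal{E}_{u,v}$ and Grothendieck's $\mathcal{C}_{0,x}\widehat{\otimes}_\varepsilon\mathcal{C}_{0,y}=\mathcal{C}_{0,xy}$ collapses the right-hand side to $\mathcal{E}_{u,v}\widehat{\otimes}\mathcal{C}_{0,(x,y)}$. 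Finally, the scalar version of Proposition~\ref{prop:RegEps}(2) (take $E=\mathbb{C}$ and view $(x,y)$ as a single variable translated by $(u,v)$) identifies this last condition with $K\in\dot{\mathcal{B}}_{xy}$, closing the chain of equivalences.

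Each equivalence above is in fact a topological isomorphism of complete locally convex spaces, since the $\varepsilon$-product is functorial with respect to continuous linear isomorphisms and the kernel identities invoked are topological; chaining them yields the identity both algebraically and topologically. Alternatively, once the algebraic identity is established, the topological part follows from the closed-graph argument sketched in the introduction, both sides being Fréchet spaces continuously embedded into $\mathcal{D}'_{xy}$. The main obstacle I foresee is the bookkeeping in the rearrangement step: verifying that the interleaved $\widehat{\otimes}$ and $\widehat{\otimes}_\varepsilon$ factors may indeed be regrouped and that Schwartz's and Grothendieck's identities can be substituted into inner slots while preserving the topology. Nuclearity of $\mathcal{E}$ is precisely what makes this regrouping legitimate.
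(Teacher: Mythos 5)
Your proposal is correct and follows essentially the same route as the paper: both reduce the identity to Grothendieck's $\mathcal{C}_{0,x}\widehat{\otimes}_\varepsilon\mathcal{C}_{0,y}=\mathcal{C}_{0,xy}$ and $\mathcal{E}_{uv}=\mathcal{E}_u\widehat{\otimes}\mathcal{E}_v$ by applying Proposition~\ref{prop:RegEps}(2) twice and regrouping the tensor factors, merely traversing the chain of equivalences in the opposite direction. For the topological part, prefer your fallback closed-graph/comparable-Fr\'echet-topologies argument over the claim that each regularization equivalence is itself a topological isomorphism (which Proposition~\ref{prop:RegEps} does not assert); the paper does exactly that, obtaining continuity of one direction from the maps $f\mapsto\partial^\alpha_x\partial^\beta_y f$ into $\mathcal{C}_{0,xy}$.
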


\begin{proof}
  In order to show the algebraic part, observe that
  for $K(x,y)\in\mathcal{D}'_{xy}$ we get
  \begin{align*}
  K(x,y)\in\dot{\mathcal{B}}_{xy} 
    & \Leftrightarrow K(x-z,y-w)\in\mathcal{E}_{zw}\widehat{\otimes}_\varepsilon \mathcal{C}_{0,xy} 
      = (\mathcal{E}_{z}\widehat{\otimes}\mathcal{C}_{0,x})\widehat{\otimes}_\varepsilon (
      \mathcal{E}_{w}\widehat{\otimes}\mathcal{C}_{0,y})\\
    & \Leftrightarrow K(x,y-w)\in\dot{\mathcal{B}}_x(\mathcal{E}_{w}\widehat{\otimes}
      \mathcal{C}_{0,y})\\
    & \Leftrightarrow K(x,y) \in \dot{\mathcal{B}}_{x}\widehat{\otimes}_\varepsilon \dot{\mathcal{B}}_y.
  \end{align*}
  Let us now show the topological identity as well.
  As the $\varepsilon$-product of two continuous linear mappings is again continuous, we see that the
  mapping
  \[
  \dot{\mathcal{B}}_x\widehat{\otimes}_\varepsilon\dot{\mathcal{B}}_y\to\mathcal{C}_{0,x}
  \widehat{\otimes}_\varepsilon\mathcal{C}_{0,y}=\mathcal{C}_{0,xy},
  f\mapsto \partial^\alpha_x\partial^\beta_yf
  \]
  is continuous for all multi-indices $\alpha$ and $\beta$. Therefore the topology of 
  $\dot{\mathcal{B}}_x\widehat{\otimes}_\varepsilon\dot{\mathcal{B}}_y$ is finer than the one of
  $\dot{\mathcal{B}}_{xy}$. Therefore these topologies are comparable Fr\'{e}chet space topologies on 
  the same vector space and hence they coincide.
\end{proof}

\begin{proposition}\label{prop:Bdp}
  The space of distributions vanishing at infinity satisfies the kernel identity
  \[
  \dot{\mathcal{B}}'_{xy}=\dot{\mathcal{B}}'_x\widehat{\otimes}_\varepsilon\dot{\mathcal{B}}'_y
  \]
  algebraically and topologically.
\end{proposition}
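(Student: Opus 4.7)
The plan is to mirror the structure used for $\dot{\mathcal{B}}$ in Proposition~\ref{prop:Bd}, but replacing the Fréchet-space argument for the topological step by a closed-graph argument, since $\dot{\mathcal{B}}'$ is not Fréchet. The algebraic part should drop out of Proposition~\ref{prop:RegEps}(1) applied in two complementary ways.

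For the algebraic identity, I would first apply Proposition~\ref{prop:RegEps}(1) with combined variables, viewing $(x,y)\in\mathbb{R}^{2n}$ as a single variable and $E=\mathbb{C}$, to obtain
\[
K(x,y)\in\dot{\mathcal{B}}'_{xy}
\iff K(x-z,y-w)\in\mathcal{D}'_{zw}\widehat{\otimes}\mathcal{C}_{0,xy}.
\]
Then I would apply the same proposition in iterated fashion: first with $E=\dot{\mathcal{B}}'_y$ in the $x$-variable (which is legitimate since $\dot{\mathcal{B}}'$ is complete, being closed in $\mathcal{D}'_{L^\infty}$), and then again in the $y$-variable with $E=\mathcal{D}'_z\widehat{\otimes}\mathcal{C}_{0,x}$, to arrive at
\[
K(x,y)\in\dot{\mathcal{B}}'_x\widehat{\otimes}_\varepsilon\dot{\mathcal{B}}'_y
\iff K(x-z,y-w)\in(\mathcal{D}'_z\widehat{\otimes}\mathcal{C}_{0,x})\widehat{\otimes}_\varepsilon(\mathcal{D}'_w\widehat{\otimes}\mathcal{C}_{0,y}).
\]
The two right-hand sides then need to be identified, which is a matter of putting together Schwartz's kernel theorem $\mathcal{D}'_{zw}=\mathcal{D}'_z\widehat{\otimes}\mathcal{D}'_w$, Grothendieck's identity $\mathcal{C}_{0,xy}=\mathcal{C}_{0,x}\widehat{\otimes}_\varepsilon\mathcal{C}_{0,y}$ from~\eqref{eq:Grothendieck}, the nuclearity of $\mathcal{D}'$ (which turns $\widehat{\otimes}$ into $\widehat{\otimes}_\varepsilon$), and associativity/commutativity of the completed $\varepsilon$-tensor product.

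For the topological identity I would invoke the abstract closed-graph scheme sketched in the introduction. Both $\dot{\mathcal{B}}'_{xy}$ and $\dot{\mathcal{B}}'_x\widehat{\otimes}_\varepsilon\dot{\mathcal{B}}'_y$ embed continuously into $\mathcal{D}'_{xy}=\mathcal{D}'_x\widehat{\otimes}_\varepsilon\mathcal{D}'_y$, so the identity mapping between them—already known to be a bijection by the algebraic part—has closed graph. Applying De~Wilde's closed graph theorem in both directions then yields a topological isomorphism, provided one checks that each side has the required structure (ultrabornologicity/webbedness), which follows from known sequence-space representations of $\dot{\mathcal{B}}'$ as a complete $(DF)$-space.

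The main obstacle is precisely this last verification: unlike in Proposition~\ref{prop:Bd}, I cannot appeal to the open mapping theorem for Fréchet spaces, so I must argue topologically via closed-graph machinery. The algebraic reductions are essentially bookkeeping on top of Proposition~\ref{prop:RegEps}(1), while the delicate point is to ensure the spaces $\dot{\mathcal{B}}'_{xy}$ and $\dot{\mathcal{B}}'_x\widehat{\otimes}_\varepsilon\dot{\mathcal{B}}'_y$ are genuinely of the right type (e.g.\ ultrabornological source and webbed target) for De~Wilde's theorem to bite in both directions.
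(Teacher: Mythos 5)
Your proposal is correct and follows essentially the same route as the paper: a double application of the regularization characterization from Proposition~\ref{prop:RegEps}, rearrangement of the completed $\varepsilon$-tensor products via the kernel theorems and Grothendieck's identity for $\mathcal{C}_0$, and then de~Wilde's closed graph theorem combined with the sequence-space representation $\dot{\mathcal{B}}'\cong s'\widehat{\otimes}c_0$ to upgrade the algebraic identity to a topological one. The only cosmetic difference is that you phrase the intermediate regularized kernels with $\mathcal{C}_{0}$ throughout (matching Proposition~\ref{prop:RegEps} literally), whereas the paper routes through $\dot{\mathcal{B}}_{xy}=\dot{\mathcal{B}}_x\widehat{\otimes}_\varepsilon\dot{\mathcal{B}}_y$ from Proposition~\ref{prop:Bd}, which is itself a consequence of the same Grothendieck identity.
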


\begin{proof}
  For $K(x,y)\in\mathcal{D}'_{xy}$ we start with the characterization
  \[
  K(x,y)\in\dot{\mathcal{B}}'_{xy}\Leftrightarrow K(x-z,y-w)\in\mathcal{D}'_{zw}\widehat{\otimes}
  \dot{\mathcal{B}}_{xy}
  \]
  of $\dot{\mathcal{B}}'$ by regularization. From this we deduce
  \begin{align*}
    K(x,y)\in\dot{\mathcal{B}}'_{xy}
    &\Leftrightarrow K(x-z,y-w)\in (\mathcal{D}'_z\widehat{\otimes}\mathcal{D}'_w)\widehat{\otimes}
      (\dot{\mathcal{B}}_x\widehat{\otimes}_\varepsilon\dot{\mathcal{B}}_y)\\
    & \Leftrightarrow K(x-z,y-w)\in(\mathcal{D}'_z\widehat{\otimes}\dot{\mathcal{B}}_x)
      \widehat{\otimes}_\varepsilon(\mathcal{D}'_w\widehat{\otimes}\dot{\mathcal{B}}_y).
  \end{align*}
  using the kernel theorem for $\dot{\mathcal{B}}$ and $\mathcal{D}'$ as well as the commutativity of
  the $\varepsilon$-tensor product. From this we get from Proposition~\ref{prop:RegEps},
  \begin{align*}
    K(x,y)\in\dot{\mathcal{B}}'_{xy} 
    & \Leftrightarrow K(x,y-w)\in\dot{\mathcal{B}}'_x\widehat{\otimes}_\varepsilon
      (\mathcal{D}'_w\widehat{\otimes}\dot{\mathcal{B}}_y) 
      =(\mathcal{D}'_w\widehat{\otimes}\dot{\mathcal{B}}_y)\widehat{\otimes}_\varepsilon
      \dot{\mathcal{B}}'_x\\
    & \Leftrightarrow K(x,y) \in \dot{\mathcal{B}}'_y\widehat{\otimes}_\varepsilon\dot{\mathcal{B}}_x',
  \end{align*}
  which proves the algebraic part of the kernel identity. Using the sequence space representation
  $\dot{\mathcal{B}}'=s'\widehat{\otimes}c_0$ given in Theorem~3 in~\cite[p.~13]{Bar15:Completing}
  and 
  \[
  (s'\widehat{\otimes}_\varepsilon c_0)\widehat{\otimes}_\varepsilon (s'\widehat{\otimes}_\varepsilon c_0)
  \cong (s'\widehat{\otimes} s')\widehat{\otimes}(c_0\widehat{\otimes}_\varepsilon c_0)
  \cong s'\widehat{\otimes} c_0
  \]
  we see by Proposition~7 in \cite[p.~13]{Bar15:Completing} that both 
  $\dot{\mathcal{B}}'_{xy}$ and $\dot{\mathcal{B}}_x'\widehat{\otimes}\dot{\mathcal{B}}'_y$ are
  complete ultrabornological (DF)-spaces. From the continuity of the embeddings 
  \[
  \dot{\mathcal{B}}'_x\widehat{\otimes}_\varepsilon\dot{\mathcal{B}}'_y\hookrightarrow
  \mathcal{D}'_x\widehat{\otimes}\mathcal{D}'_y=\mathcal{D}'_{xy}\hookleftarrow \dot{\mathcal{B}}'_{xy},
  \]
  we deduce that the identity mapping $\dot{\mathcal{B}}'_x\widehat{\otimes}_\varepsilon
  \dot{\mathcal{B}}'_y\to\dot{\mathcal{B}}'_{x,y}$ has a closed graph. Therefore the topological 
  identity follows by de Wilde's closed graph theorem (Theorem~5.4.1 in~\cite[p.~92]{Jar81:LCS}) 
  since complete (DF)-spaces have a completing web by Proposition~12.4.6 in~\cite[p.~260]{Jar81:LCS}.
\end{proof}

\section{Regularization and the projective tensor product}

In order to proof a version of Proposition~\ref{prop:RegEps} for the projective tensor product, we 
need the following lemma.

\begin{lemma}\label{lem:Dlp}
  For $1<q<\infty$ the following continuous embeddings hold:
  \[
  \mathcal{S}_x\widehat{\otimes}\mathcal{D}_{L^q,y}\hookrightarrow \mathcal{D}_{L^q,xy}
  \hookrightarrow \mathcal{E}_x\widehat{\otimes}\mathcal{D}_{L^q,y},
  \]
  i.e., theses spaces are contained with a finer topology. Moreover these spaces are
  contained as dense subspaces.
\end{lemma}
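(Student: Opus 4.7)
The plan is to describe each space in terms of its natural seminorms and then compare them. Since $\mathcal{S}$ is nuclear, I would identify $\mathcal{S}_x\widehat{\otimes}\mathcal{D}_{L^q,y}$ with the space of smooth functions $f(x,y)$ satisfying $\sup_x(1+|x|)^N\|\partial_x^\alpha\partial_y^\beta f(x,\cdot)\|_{L^q_y}<\infty$ for all $N,\alpha,\beta$, and $\mathcal{E}_x\widehat{\otimes}\mathcal{D}_{L^q,y}$ with the space of smooth $\mathcal{D}_{L^q,y}$-valued functions of $x$, seminormed by $\sup_{x\in K}\|\partial_x^\alpha\partial_y^\beta f(x,\cdot)\|_{L^q_y}$ for compact $K\subset\mathbb{R}^n_x$.

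For the first inclusion I would use the elementary estimate
\[
\|\partial_x^\alpha\partial_y^\beta f\|_{L^q_{xy}}^q\le\Bigl(\sup_x(1+|x|)^N\|\partial_x^\alpha\partial_y^\beta f(x,\cdot)\|_{L^q_y}\Bigr)^q\int(1+|x|)^{-Nq}\,dx,
\]
with $N$ chosen so large that $Nq>n$. The second inclusion is the main technical point. Given $f\in\mathcal{D}_{L^q,xy}$, the Fubini identity $L^q_{xy}=L^q_x(L^q_y)$ tells us that, for every $\alpha,\beta$, the function $x\mapsto\partial_x^\alpha\partial_y^\beta f(x,\cdot)$ belongs to every vector-valued Sobolev space $W^{k,q}(\mathbb{R}^n_x;L^q_y)$. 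The vector-valued Sobolev embedding $W^{k,q}\hookrightarrow\mathcal{C}_b$ (valid for $kq>n$, with additional derivatives for higher regularity) then shows that $x\mapsto f(x,\cdot)$ is $\mathcal{C}^\infty$ from $\mathbb{R}^n_x$ into $\mathcal{D}_{L^q,y}$, with derivatives bounded on compacts in terms of finitely many seminorms of $f$ in $\mathcal{D}_{L^q,xy}$.

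Density in both directions should follow from the common subspace $\mathcal{D}_x\otimes\mathcal{D}_y$, which is contained in all three spaces and is dense in each of them because $\mathcal{D}$ is dense in $\mathcal{D}_{L^q}$ for $q<\infty$, in $\mathcal{S}$, and in $\mathcal{E}$, and because the nuclear Fréchet spaces $\mathcal{S}$ and $\mathcal{E}$ have the approximation property.

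I expect the main obstacle to be the second embedding, where we must upgrade purely scalar $L^q$-bounds on $f$ and all its derivatives to smoothness of the $\mathcal{D}_{L^q,y}$-valued curve $x\mapsto f(x,\cdot)$. The vector-valued Sobolev embedding is the key tool, and the hypothesis $1<q<\infty$ ensures that we remain in a reflexive, separable Bochner-space regime where that embedding applies without endpoint subtleties. The rest of the argument is a matter of carefully unwinding the seminorms.
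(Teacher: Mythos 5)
Your proposal is correct and follows essentially the same route as the paper: the identical weighted H\"older estimate for $\mathcal{S}_x\widehat{\otimes}\mathcal{D}_{L^q,y}\hookrightarrow\mathcal{D}_{L^q,xy}$, a Sobolev-embedding argument to upgrade $L^q$-bounds on all derivatives to smoothness of $x\mapsto f(x,\cdot)$ for the second inclusion, and density via $\mathcal{D}_x\otimes\mathcal{D}_y$. The only (harmless) deviation is that you invoke the Banach-space-valued Morrey embedding $W^{k,q}(\mathbb{R}^n_x;L^q_y)\hookrightarrow\mathcal{C}_b$ where the paper uses dominated convergence together with the scalar Sobolev trace theorem.
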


\begin{proof}
  From $\mathcal{E}_{xy}=\mathcal{E}_x\widehat{\otimes}\mathcal{E}_y$, we deduce
  that $\mathcal{D}_{L^q,x}\widehat{\otimes}\mathcal{E}_y$ is a space of smooth functions. Using
  Lebesgue's theorem on dominated convergence we conclude that for $f\in\mathcal{D}_{L^q,xy}$ the
  function $\mathbb{R}^{d}_x\to\mathcal{D}_{L^q,y}, x\mapsto f(x,\cdot)$ has continuous derivatives 
  of all order. Continuity of the embedding $\mathcal{D}_{L^q,xy}  \hookrightarrow \mathcal{E}_x
  \widehat{\otimes}\mathcal{D}_{L^q,y}$ follows inductively from the Sobolev trace theorem, see, e.g.,
  Theorem~5.36 in~\cite{AF03:Sobolev}.

  Given $f\in\mathcal{S}_x\widehat{\otimes}\mathcal{D}_{L^q,y}$, the inequality
  \begin{align*}
    \int_{\mathbb{R}^{d_1+d_2}}|f(x,y)|^p\;{\rm d}x\,{\rm d}y 
    & = \int_{\mathbb{R}^{d_1+d_2}}(1+|x|^2)^{-d_1-1}(1+|x|^2)^{d_1+1}|f(x,y)|^p\;{\rm d}x\,{\rm d}y\\
    & \leq \int_{\mathbb{R}^{d_1}}(1+|x|^2)^{-d_1-1}\;{\rm d}x \sup_{x\in\mathbb{R}^{d1}}
      \int_{\mathbb{R}^{d_2}}(1+|x|^2)^{d_1+1}|f(x,y)|^p\;{\rm d}y\\
    & \leq C \sup_{x\in\mathbb{R}^{d_2}}(1+|x|^2)^{d_1+1} \int_{\mathbb{R}^{d_2}}|f(x,y)|^p\;{\rm d}y
  \end{align*}
  proves $ \mathcal{S}_x\widehat{\otimes}\mathcal{D}_{L^q,y}\hookrightarrow \mathcal{D}_{L^q,xy}$.
  The spaces are contained as dense subspaces since
  \[
  \mathcal{D}_{xy}\hookrightarrow \mathcal{D}_x\widehat{\otimes}\mathcal{D}_y\subset
  \mathcal{S}_x\widehat{\otimes}\mathcal{D}_{L^q,y}
  \]
  and the injective tensor product preserves dense subspaces by Proposition~16.2.5 
  in~\cite[p.~349]{Jar81:LCS}.
\end{proof}

\begin{proposition}\label{prop:Reg}
Let $E$ be a space of distributions and $1\leq p<\infty$. For  $K(x,z)\in\mathcal{D}'_{xz}$ 
the following characterizations hold:
\begin{enumerate}
  \item $K(x,z)\in\mathcal{D}'_{L^{p},x}\widehat{\otimes}_\pi E_z 
    \Leftrightarrow K(x-y,z)\in (\mathcal{D}'_y\widehat{\otimes} L^{p}_{x})\widehat{\otimes}_\pi E_z$.
  \item $K(x,z)\in\mathcal{D}_{L^{p},x}\widehat{\otimes}_\pi E_z \Leftrightarrow 
    K(x-y,z)\in(\mathcal{E}_y\widehat{\otimes}L^{p}_x)\widehat{\otimes}_\pi E_z$.
\end{enumerate}
\end{proposition}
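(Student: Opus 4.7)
The plan is to follow the same scheme as the proof of Proposition~\ref{prop:RegEps}, replacing the $\varepsilon$-product by the projective tensor product, $\dot{\mathcal{B}}'$ by $\mathcal{D}'_{L^p}$ (respectively $\dot{\mathcal{B}}$ by $\mathcal{D}_{L^p}$), and $\mathcal{C}_0$ by $L^p$. The scalar input is the classical regularization characterization $T\in\mathcal{D}'_{L^p}\Leftrightarrow T(x-y)\in\mathcal{D}'_y\widehat{\otimes}L^p_x$ stated in the introduction, together with its analogue for $\mathcal{D}_{L^p}$ obtained by replacing $\mathcal{D}'_y$ by $\mathcal{E}_y$.

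For the implication ``$\Rightarrow$'' in both parts, I would observe that the regularization maps
\[
\mathcal{D}'_{L^p}\to\mathcal{D}'_y\widehat{\otimes}L^p_x,\quad S\mapsto S(x-y),
\qquad\text{and}\qquad
\mathcal{D}_{L^p}\to\mathcal{E}_y\widehat{\otimes}L^p_x,\quad S\mapsto S(x-y),
\]
are well-defined and continuous linear. Since the projective tensor product is functorial with respect to continuous linear maps, tensoring with $\mathrm{id}_{E_z}$ yields the desired continuous embeddings into the vectorial spaces.

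For the converse in part~(1), I would reproduce the multiplication-and-coordinate-change scheme from Proposition~\ref{prop:RegEps}(1): multiply $K(x-y,z)$ by $\delta(w-y)\in\mathcal{D}_y\widehat{\otimes}\mathcal{D}'_w$ via Proposition~25 of~\cite{Sch57:AIF1}, perform the coordinate transform $(u,v)=(x-y,y)$ to realize $\delta(v-w)K(u,z)$ inside a space of the form $\mathcal{D}'_{L^p,uv}\widehat{\otimes}\mathcal{D}'_w\widehat{\otimes}_\pi E_z$, evaluate with $e^{-|v|^2}\in\mathcal{S}_v$, and finally multiply by $e^{|w|^2}\in\mathcal{E}_w$ to isolate $K(u,z)\in\mathcal{D}'_{L^p,u}\widehat{\otimes}_\pi E_z$. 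For part~(2) the argument is shorter and parallels Proposition~\ref{prop:RegEps}(2): pairing $K(x-y,z)\in\mathcal{E}_y(L^p_x\widehat{\otimes}_\pi E_z)$ with each $\partial^\alpha\delta(y)\in\mathcal{E}'_y$ produces $\partial^\alpha_xK(x,z)\in L^p_x\widehat{\otimes}_\pi E_z$ for every multi-index~$\alpha$, from which $K\in\mathcal{D}_{L^p,x}\widehat{\otimes}_\pi E_z$ follows via Lemma~\ref{lem:Dlp} together with the defining seminorms of $\mathcal{D}_{L^p}$ and the compatibility of the vector-valued scalar product with continuous linear maps recorded in~\cite[p.~18]{Sch58:AIF2}.

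The main obstacle will be justifying the intermediate tensor-product identifications in the projective setting, where the algebraic flexibility enjoyed by the $\varepsilon$-product is no longer available. Specifically, the argument in part~(1) requires a $\pi$-analogue of the embedding $\mathcal{C}_{0,x}\widehat{\otimes}_\varepsilon\mathcal{E}'_y\hookrightarrow\dot{\mathcal{B}}'_{xy}$, namely $L^p_x\widehat{\otimes}_\pi\mathcal{E}'_y\hookrightarrow\mathcal{D}'_{L^p,xy}$, and a careful verification that the scalar evaluations and the coordinate change used above commute with $\widehat{\otimes}_\pi$; in part~(2) the delicate point is to pass from the pointwise information ``$\partial^\alpha_xK\in L^p_x\widehat{\otimes}_\pi E_z$ for all $\alpha$'' to membership in the completed projective tensor product $\mathcal{D}_{L^p,x}\widehat{\otimes}_\pi E_z$, which is precisely where Lemma~\ref{lem:Dlp} enters.
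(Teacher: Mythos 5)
Your proposal follows essentially the same route as the paper's proof: the forward implications via functoriality of $\widehat{\otimes}_\pi$ applied to the scalar regularization maps, the converse in part~(1) via multiplication by $\delta(w-y)$, the coordinate transform, evaluation at ${\rm e}^{-|v|^2}$ and multiplication by ${\rm e}^{|w|^2}$, and the converse in part~(2) via pairing with $\partial^\alpha\delta$; you also correctly identify the key technical point, namely the embedding $L^p_x\widehat{\otimes}_\pi\mathcal{E}'_y\hookrightarrow\mathcal{D}'_{L^p,xy}$, which the paper settles using Lemma~\ref{lem:Dlp} together with Grothendieck's duality theorem. The only minor discrepancy is that the paper invokes Lemma~\ref{lem:Dlp} at that point in part~(1), whereas in part~(2) it concludes directly from the compatibility of the vector-valued scalar product with continuous linear mappings.
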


\begin{proof}
  \begin{enumerate}
  \item \begin{itemize}
    \item[$\Rightarrow$:] 
      The mapping $\tau\colon\mathcal{D}'_{L^{p}}\to\mathcal{D}'_y\widehat{\otimes} L^{p}_x,
      S\mapsto S(x-y)$ is well-defined, linear and continuous according to~\cite[p.~204]{Sch66:ThD}. 
      Hence also
      \[
      \tau\otimes {\rm id}_E\colon \mathcal{D}'_{L^p,x}\widehat{\otimes}_\pi E_z
      \to (\mathcal{D}'_y\widehat{\otimes}L^p_x)\widehat{\otimes}_\pi E_z, K(x,z)\mapsto K(x-y,z)
      \]
      as the $\pi$-tensor product of continuous linear mappings is again a continuous and linear
      mapping.
    \item[$\Leftarrow$:] Multiplication of $K(x-y,z)\in \mathcal{D}'_y(L^p_x\widehat{\otimes}_\pi E_z)$
      with $\delta(w-y)\in\mathcal{D}_y\widehat{\otimes}\mathcal{D}'_w$ according to Proposition~25 
      in~\cite[p.~120]{Sch58:AIF2} yields
      \begin{align*}
        \delta(w-y)K(x-y,z)\in\mathcal{E}'_y(L^{p}_x\widehat{\otimes}_\pi(\mathcal{D}'_w
        \widehat{\otimes}_\pi E_z))
        & = \mathcal{E}'_y\widehat{\otimes}_\pi L^{p}_x\widehat{\otimes}_\pi
          \mathcal{D}'_w\widehat{\otimes}_\pi E_z\\
        & = \mathcal{D}'_w(\mathcal{E}'_y(L^p_x))\widehat{\otimes}_\pi E_z \\
        & \subset (\mathcal{D}'_w\widehat{\otimes}\mathcal{D}'_{L^{p},xy})\widehat{\otimes}_\pi E_z.
      \end{align*}
      Note that the inclusion $L^p\widehat{\otimes}\mathcal{E}'\subset \mathcal{D}'_{L^p}$ follows
      from $L^p\widehat{\otimes}\mathcal{E}'\subset\mathcal{D}'_{L^p}\widehat{\otimes}\mathcal{E}'$ and
      from $\dot{\mathcal{B}}_{xy}=\dot{\mathcal{B}}_{x}\widehat{\otimes}_\varepsilon\dot{\mathcal{B}}_y
      \hookrightarrow \dot{\mathcal{B}}_x\widehat{\otimes}\mathcal{E}_y$ for $p=1$ and from 
      Lemma~\ref{lem:Dlp} and $(\dot{\mathcal{B}}\widehat{\otimes}\mathcal{E})'=\mathcal{D}'_{L^{1}_x}
      \widehat{\otimes}\mathcal{E}'_y$ and $(\mathcal{D}_{L^q}\widehat{\otimes}\mathcal{E})'=
      \mathcal{D}'_{L^p}\widehat{\otimes}\mathcal{E}'$ by Th\'{e}or{\`e}me~12 
      in~\cite[chap.~II, p.~76]{Gro55:ProdTens}
      for $1<p<\infty$. Using the coordinate transform
      \begin{align*}
        x-y&=u  & x &=u+v\\
        y&= v & y&=v
      \end{align*}
      we obtain 
      \[
      \delta(w-v)K(u,z)\in \mathcal{D}'_{L^p,u,v}\widehat{\otimes}_\pi
      (\mathcal{D}'_w\widehat{\otimes}_\pi
      E_z)
      \]
      from the invariance of $\mathcal{D}'_{L^p,x,y}$ under coordinate transforms.\\ From 
      $\mathcal{D}'_{L^p,x,y} \subset \mathcal{D}'_{L^p,x}\widehat{\otimes}\mathcal{S}'_y$ we deduce that
      the application of
      \[
      \delta(w-v)K(u,z)\in\mathcal{S}'_v(\mathcal{D}'_w\widehat{\otimes}(\mathcal{D}'_{L^p,u}
      \widehat{\otimes}_\pi E_z))
      \]
      to ${\rm e}^{-|v|^2}\in\mathcal{S}_y$ is
      \[
      {\rm e}^{-|w|^2}K(u,z)\in\mathcal{D}'_w\widehat{\otimes}(\mathcal{D}'_{L^p,u}
      \widehat{\otimes}_\pi E_z).
      \]
      Multiplication by ${\rm e}^{|w|^2}\in\mathcal{E}_w$ according to Theorem~7.1 
      in~\cite[p.~31]{Sch57:Tata} yields 
      $K(u,z)\in\mathcal{D}'_w\widehat{\otimes}(\mathcal{D}'_{L^p,u}\widehat{\otimes}_\pi E_z)$ and 
      hence $K(u,z)\in\mathcal{D}'_{L^p,u}\widehat{\otimes}_\pi E_z$.
    \end{itemize} 
  \item The implication ``$\Rightarrow$'' is completely analogous to the case of distributions if we
    use that the convolution mapping $*\colon\mathcal{E}'\times\mathcal{D}_{L^p}\to L^p$
    is well defined and hypocontinuous since $\mathcal{E}'\hookrightarrow\mathcal{D}'_{L^1}$ 
    and $\mathcal{D}_{L^p}\hookrightarrow L^p$ (see e.g.~\cite{Lar2013:Multipliers}).\\
    Let us show the implication ``$\Leftarrow$''. The vectorial scalar product of
    \[
    K(x-y,z)\in \mathcal{E}_y\widehat{\otimes}(L^p_{x}\widehat{\otimes}_\pi E_z)
    \]
    with $\partial^\alpha\delta(y)\in\mathcal{E}'_y$ yields $\partial^\alpha_xK(x,z)\in L^p_{x}
    \widehat{\otimes}_\pi E_z$ for all $\alpha\in\mathbb{N}_0^{n}$. From this we deduce 
    $K(x,z)\in\mathcal{D}_{L^p,x}\widehat{\otimes}_\pi E_z$ using the compatibility of the
    vector-valued scalar product with continuous linear mappings by~\cite[p.~18]{Sch58:AIF2}.
  \end{enumerate}
\end{proof}

\begin{remark}
  More general, the proof of equivalence 1 in Proposition~\ref{prop:Reg} also works in the following 
  situation. Let $\mathcal{H}'$ be a space of distributions and $\mathcal{K}$ a space of functions 
  such that the convolution mapping $\mathcal{H}'\times\mathcal{D}\to\mathcal{K}$ is hypocontinuous. 
  If additionally the embeddings 
  \begin{equation}\label{eq:embeddings1}
  \mathcal{K}_{x}\widehat{\otimes}\mathcal{E}'_{y}\hookrightarrow \mathcal{H}'_{x,y}
  \hookrightarrow \mathcal{H}'_x\widehat{\otimes}\mathcal{S}'_y
  \end{equation}
  are well-defined and continuous, for kernels $K(x,y)\in\mathcal{D}'_{x,y}$ we get the following 
  equivalence
  \[
  K(x,z)\in\mathcal{H}'_x\widehat{\otimes}_\pi E_z 
  \Leftrightarrow K(x-y,z)\in (\mathcal{D}'_y\widehat{\otimes}\mathcal{K}_x)\widehat{\otimes}_\pi E_z.
  \]
  Examples of spaces $\mathcal{H}'$ satisfying condition~\eqref{eq:embeddings1} are duals of normal 
  spaces of distributions $\mathcal{H}$ where the embeddings
  \[
  \mathcal{S}_x\widehat{\otimes}\mathcal{H}_y\hookrightarrow \mathcal{H}_{x,y} \hookrightarrow
  \mathcal{E}_x\widehat{\otimes}\mathcal{H}_y
  \]
  are well-defined and continuous. Note that the spaces $\mathcal{S}\widehat{\otimes}\mathcal{H}$ and
  $\mathcal{E}\widehat{\otimes}\mathcal{H}$ are spaces of $\mathcal{H}$-valued smooth functions. We
  refer to~\cite{Sch54:FoncDiff} for a detailed treatment of these spaces.
\end{remark}

In the following we will discuss two kernel-identities as applications of 
Proposition~\ref{prop:Reg}.

\begin{proposition}[{see~Proposition~38 in~\cite[p.~135]{Sch57:AIF1}}]\label{prop:IntDist}
  The space of integrable distributions satisfies the kernel identity
  \[
  \mathcal{D}'_{L^1,x,y}=\mathcal{D}'_{L^{1},x}\widehat{\otimes}_\pi\mathcal{D}'_{L^{1},y}
  \]
  algebraically and topologically.
\end{proposition}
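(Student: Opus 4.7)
The plan is to mirror the proof of Proposition~\ref{prop:Bdp}, with Proposition~\ref{prop:Reg}(1) for $p=1$ playing the role that Proposition~\ref{prop:RegEps}(1) played there. For the algebraic identity, I start from the scalar regularization characterization of $\mathcal{D}'_{L^1}$ applied in $2n$ variables,
\[
K(x,y)\in\mathcal{D}'_{L^1,xy}\;\Longleftrightarrow\;K(x-z,y-w)\in\mathcal{D}'_{zw}\widehat{\otimes}L^1_{xy},
\]
combine it with the kernel theorem $\mathcal{D}'_{zw}=\mathcal{D}'_z\widehat{\otimes}\mathcal{D}'_w$ and Grothendieck's identity $L^1_{xy}=L^1_x\widehat{\otimes}_\pi L^1_y$ from~\eqref{eq:Grothendieck}, and rearrange (the nuclearity of $\mathcal{D}'$ makes the mixed $\widehat{\otimes}$ unambiguous and justifies associativity) to obtain
\[
K(x-z,y-w)\in(\mathcal{D}'_z\widehat{\otimes}L^1_x)\widehat{\otimes}_\pi(\mathcal{D}'_w\widehat{\otimes}L^1_y).
\]

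Applying Proposition~\ref{prop:Reg}(1) with $E_z=\mathcal{D}'_w\widehat{\otimes}L^1_y$ pulls back the $x$-regularization to $K(x,y-w)\in\mathcal{D}'_{L^1,x}\widehat{\otimes}_\pi(\mathcal{D}'_w\widehat{\otimes}L^1_y)$. Swapping factors via commutativity of the $\pi$-tensor product and invoking Proposition~\ref{prop:Reg}(1) a second time with $E_z=\mathcal{D}'_{L^1,x}$ yields $K(x,y)\in\mathcal{D}'_{L^1,x}\widehat{\otimes}_\pi\mathcal{D}'_{L^1,y}$, establishing the algebraic equality.

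For the topological part, I follow the closed-graph strategy from Proposition~\ref{prop:Bdp}. The continuous embeddings
\[
\mathcal{D}'_{L^1,x}\widehat{\otimes}_\pi\mathcal{D}'_{L^1,y}\hookrightarrow\mathcal{D}'_x\widehat{\otimes}\mathcal{D}'_y=\mathcal{D}'_{xy}\hookleftarrow\mathcal{D}'_{L^1,xy}
\]
force the identity between the two (now algebraically equal) spaces to have closed graph. A sequence-space representation of the form $\mathcal{D}'_{L^1}\cong s'\widehat{\otimes}\ell^1$, analogous to the one used for $\dot{\mathcal{B}}'$ in Proposition~\ref{prop:Bdp}, together with $\ell^1\widehat{\otimes}_\pi\ell^1\cong\ell^1$ and the computation
\[
(s'\widehat{\otimes}\ell^1)\widehat{\otimes}_\pi(s'\widehat{\otimes}\ell^1)\cong(s'\widehat{\otimes}s')\widehat{\otimes}(\ell^1\widehat{\otimes}_\pi\ell^1)\cong s'\widehat{\otimes}\ell^1,
\]
should exhibit both sides as complete ultrabornological (DF)-spaces, so that de Wilde's closed graph theorem concludes.

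The algebraic part is essentially bookkeeping once Proposition~\ref{prop:Reg} is at hand. The main obstacle I anticipate is the topological step: completeness and ultrabornologicity of the projective tensor product $\mathcal{D}'_{L^1,x}\widehat{\otimes}_\pi\mathcal{D}'_{L^1,y}$ must be verified carefully via the sequence-space representation, since complete (DF)-spaces are not ultrabornological in general; only once this structural input is in place can de Wilde's theorem be invoked.
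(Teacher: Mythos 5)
Your argument is correct, and its algebraic half coincides with the paper's proof step for step: the regularization characterization of $\mathcal{D}'_{L^1}$ in $2n$ variables, the identities $\mathcal{D}'_{zw}=\mathcal{D}'_z\widehat{\otimes}\mathcal{D}'_w$ and $L^1_{xy}=L^1_x\widehat{\otimes}_\pi L^1_y$, and then two applications of Proposition~\ref{prop:Reg}. Where you genuinely diverge is the topological part. You transplant the symmetric closed-graph argument of Proposition~\ref{prop:Bdp}: show that \emph{both} spaces are complete ultrabornological (DF)-spaces via the representation $\mathcal{D}'_{L^1}\cong s'\widehat{\otimes}\ell^1$ and apply de Wilde's theorem in both directions. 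The paper instead splits the two directions: continuity of $\mathcal{D}'_{L^1,x}\widehat{\otimes}_\pi\mathcal{D}'_{L^1,y}\to\mathcal{D}'_{L^1,xy}$ is obtained from the universal property of $\widehat{\otimes}_\pi$ --- the bilinear map $(S,T)\mapsto S(x)\otimes T(y)$ is separately continuous by the closed graph theorem, hence continuous because the factors are (DF)-spaces --- and only the reverse direction uses de Wilde, with $\mathcal{D}'_{L^1,xy}$ ultrabornological as domain and the complete (DF)-space $\mathcal{D}'_{L^1,x}\widehat{\otimes}_\pi\mathcal{D}'_{L^1,y}$ merely as webbed codomain. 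The paper's route thus sidesteps precisely the point you flag as the main obstacle, namely ultrabornologicity of the projective tensor product. Your route needs that extra input, but it is available: $\mathcal{D}'_{L^1}\cong s'\widehat{\otimes}\ell^1$ is in the Valdivia--Vogt tables, and together with $\ell^1\widehat{\otimes}_\pi\ell^1\cong\ell^1$ and $s'\widehat{\otimes}s'\cong s'$ it gives $(s'\widehat{\otimes}\ell^1)\widehat{\otimes}_\pi(s'\widehat{\otimes}\ell^1)\cong s'\widehat{\otimes}\ell^1$, which is a complete ultrabornological (DF)-space by the same structural result (Proposition~7 of~\cite{Bar15:Completing}) that the paper invokes for $\dot{\mathcal{B}}'=s'\widehat{\otimes}c_0$. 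Both arguments close; the paper's is slightly more economical, while yours is more uniform with the treatment of $\dot{\mathcal{B}}'$.
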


\begin{proof}
For $K(x,y)\in\mathcal{D}'_{x,y}$ we have the equivalence
\[
K(x,y)\in\mathcal{D}'_{L^1,x,y}\Leftrightarrow K(x-z,y-w)\in\mathcal{D}'_{z,w}\widehat{\otimes}L^{1}_{x,y}
\]
which follows from the characterization of $\mathcal{D}'_{L^1}$ by regularization given in 
Th\'{e}or\`{e}me~XXV in~\cite[p.~201]{Sch66:ThD}.
Using the kernel identities $\mathcal{D}'_{x,y}=\mathcal{D}'_{x}\widehat{\otimes}\mathcal{D}'_{y}$
and $L^{1}_{xy}=L^{1}_x\widehat{\otimes}_\pi L^{1}_y$, we obtain
\[
K(x-z,y-w)\in\mathcal{D}'_z\widehat{\otimes}\mathcal{D}'_w(L^{1}_x\widehat{\otimes}_\pi L^{1}_y)
=(\mathcal{D}'_z\widehat{\otimes}L^{1}_x)\widehat{\otimes}_\pi(\mathcal{D}'_w\widehat{\otimes}_\pi L^1_y).
\]
Applying Proposition~\ref{prop:Reg} twice to the line above, we finally get
\begin{align*}
K(x,y)\in\mathcal{D}'_{L^1,x,y} &\Leftrightarrow K(x-z,y-w)\in(\mathcal{D}'_z\widehat{\otimes}L^{1}_x)\widehat{\otimes}_\pi(\mathcal{D}'_w\widehat{\otimes}_\pi L^1_y)\\
&\Leftrightarrow K(x,y)\in\mathcal{D}'_{L^1,x}\widehat{\otimes}\mathcal{D}'_{L^1,y},
\end{align*}
i.e. we have shown the algebraic identity $\mathcal{D}'_{L^1,xy}=\mathcal{D}'_{L^1,x}\widehat{\otimes}_\pi
\mathcal{D}'_{L^1,y}$.\\
In order to prove the continuity of the identity mapping
\[
\mathcal{D}'_{L^1,x}\widehat{\otimes}_\pi\mathcal{D}'_{L^1,y} \to \mathcal{D}'_{L^1,xy}
\]
it is sufficient to show the continuity of the bilinear mapping
\[
\mathcal{D}'_{L^1,x} \times \mathcal{D}'_{L^1,y} \to \mathcal{D}'_{L^1,xy}, 
(S(x),T(y))\mapsto S(x)\otimes T(y).
\]
The continuity of this mapping follows from the separate continuity due to the fact that for 
(DF)-spaces separate continuity of bilinear maps implies continuity. The separate continuity follows
immediately by the closed graph theorem.\\
By de Wilde's closed graph theorem (Theorem~5.4.1 in~\cite[p.~92]{Jar81:LCS}) the identity is a
topological isomorphism because $\mathcal{D}'_{L^1,xy}$ is ultrabornological and $\mathcal{D}'_{L^1,x}
\widehat{\otimes}_\pi\mathcal{D}'_{L^1,y}$ is a complete (DF)-space and, hence, has a completing web
by Proposition~12.4.6 in~\cite[p.~260]{Jar81:LCS}.
\end{proof}

\begin{proposition}\label{prop:IntFunct}
  The space of integrable smooth functions satisfies the kernel identity 
  \[
  \mathcal{D}_{L^1,xy}=\mathcal{D}_{L^1,x}\widehat{\otimes}_\pi\mathcal{D}_{L^1,y}
  \]
  algebraically and toplogically.
\end{proposition}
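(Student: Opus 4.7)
The plan is to mirror the strategy of Proposition~\ref{prop:IntDist}: first establish the algebraic identity by reducing it to a regularization statement handled by Proposition~\ref{prop:Reg}, and then obtain the topological identity by an open mapping argument. The essential difference from Proposition~\ref{prop:IntDist} is that $\mathcal{D}_{L^1}$ is a Fréchet space rather than a (DF)-space, so Banach's classical open mapping theorem will suffice here in place of de Wilde's closed graph theorem.

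For the algebraic part, I would start from the regularization characterization
\[
K(x,y)\in\mathcal{D}_{L^1,xy}\Leftrightarrow K(x-z,y-w)\in\mathcal{E}_{zw}\widehat{\otimes}L^1_{xy},
\]
which is the $\mathcal{D}_{L^1}$-analogue of the $\dot{\mathcal{B}}$-characterization used in the proof of Proposition~\ref{prop:Bd}. Invoking the nuclearity of $\mathcal{E}$, the kernel identities $\mathcal{E}_{zw}=\mathcal{E}_z\widehat{\otimes}\mathcal{E}_w$ and $L^1_{xy}=L^1_x\widehat{\otimes}_\pi L^1_y$, and the commutativity of the completed $\pi$-tensor product, the right-hand side can be rewritten as
\[
K(x-z,y-w)\in(\mathcal{E}_z\widehat{\otimes}L^1_x)\widehat{\otimes}_\pi(\mathcal{E}_w\widehat{\otimes}L^1_y).
\]
Applying part~2 of Proposition~\ref{prop:Reg} with $p=1$ and $E_z=\mathcal{E}_w\widehat{\otimes}L^1_y$ converts this to $K(x,y-w)\in\mathcal{D}_{L^1,x}\widehat{\otimes}_\pi(\mathcal{E}_w\widehat{\otimes}L^1_y)$, and a second application in the $y$ variable (now with $E=\mathcal{D}_{L^1,x}$) yields $K(x,y)\in\mathcal{D}_{L^1,x}\widehat{\otimes}_\pi\mathcal{D}_{L^1,y}$, closing the chain of equivalences.

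For the topological part, the bilinear map $\mathcal{D}_{L^1,x}\times\mathcal{D}_{L^1,y}\to\mathcal{D}_{L^1,xy}$, $(f,g)\mapsto f\otimes g$, is evidently continuous thanks to the product estimate $\|\partial^\alpha f\otimes\partial^\beta g\|_{L^1_{xy}}=\|\partial^\alpha f\|_{L^1_x}\|\partial^\beta g\|_{L^1_y}$. By the universal property of $\widehat{\otimes}_\pi$ it extends to a continuous linear map $\mathcal{D}_{L^1,x}\widehat{\otimes}_\pi\mathcal{D}_{L^1,y}\to\mathcal{D}_{L^1,xy}$, which is bijective by the algebraic identity. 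Since the completed projective tensor product of two Fréchet spaces is again Fréchet, both spaces are Fréchet, and Banach's open mapping theorem upgrades this bijection to a topological isomorphism.

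The main obstacle, I expect, is bookkeeping: pinning down a clean justification of the initial regularization characterization of $\mathcal{D}_{L^1,xy}$ and carefully tracking the rearrangement of completed tensor products, where the nuclearity of $\mathcal{E}$ is needed to interchange $\widehat{\otimes}$ and $\widehat{\otimes}_\pi$ between the inner factors. Once that is settled, the two-fold application of Proposition~\ref{prop:Reg} and the Fréchet open mapping argument are essentially mechanical.
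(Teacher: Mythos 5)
Your proposal is correct and follows essentially the same route as the paper: the same regularization characterization of $\mathcal{D}_{L^1,xy}$, the same rewriting via $\mathcal{E}_{zw}=\mathcal{E}_z\widehat{\otimes}\mathcal{E}_w$ and $L^1_{xy}=L^1_x\widehat{\otimes}_\pi L^1_y$, and the same two-fold application of Proposition~\ref{prop:Reg}. For the topological part the paper likewise establishes continuity of the identity from $\mathcal{D}_{L^1,x}\widehat{\otimes}_\pi\mathcal{D}_{L^1,y}$ into $\mathcal{D}_{L^1,xy}$ (via continuity of $f\mapsto\partial^\alpha_x\partial^\beta_y f$ into $L^1_{xy}$ rather than your norm estimate on elementary tensors) and then concludes from the comparability of Fr\'echet topologies, which is the same argument as your open mapping step.
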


\begin{proof}
  For $S\in\mathcal{D}'$ we get
  \[
  S\in\mathcal{D}_{L^1}\Leftrightarrow S(x-y)\in\mathcal{E}_y\widehat{\otimes}L^1_x
  \]
  and therefore for $K\in\mathcal{D}'_{xy}$,
  \[
  K(x,y)\in\mathcal{D}_{L^1,xy}\Leftrightarrow K(x-z,y-w)\in\mathcal{E}_{zw}\widehat{\otimes}L^1_{xy}.
  \]
  From this equivalence, we deduce
  \begin{align*}
    K(x,y)\in\mathcal{D}_{L^1,xy}
    &\Leftrightarrow K(x-z,y-w)\in\mathcal{E}_z\widehat{\otimes}\mathcal{E}_w
      \widehat{\otimes}\left(L^1_x\widehat{\otimes}_\pi L^1_y\right)
      = \left(\mathcal{E}_z\widehat{\otimes} L^1_x\right)\widehat{\otimes}_\pi
      \left(\mathcal{E}_w\widehat{\otimes} L^1_y\right)
  \end{align*}
  using the classical kernel identities $\mathcal{E}_{xy}=\mathcal{E}_x\widehat{\otimes}\mathcal{E}_y$
  and $L^{1}_{xy}=L^1_x\widehat{\otimes}_\pi L^1_y$. From Proposition~\ref{prop:Reg}, applied twice,
  we get
  \[
  K(x,y)\in\mathcal{D}_{L^1,xy}\Leftrightarrow K(x,y-w)\in\mathcal{D}_{L^1,x}\widehat{\otimes}_\pi
  \left(\mathcal{E}_w\widehat{\otimes}L^1_y\right)
  \Leftrightarrow K(x,y)\in\mathcal{D}_{L^1,xy}.
  \]
  As the $\pi$-tensor product of continuous mappings is continuous, the mapping
  \[
  \mathcal{D}_{L^1,x}\widehat{\otimes}_\pi\mathcal{D}_{L^1,y}\to L^{1}_{x}\widehat{\otimes}_\pi L^{1}_{y}=L^{1}_{xy},\;
  f\mapsto \partial^\alpha_x\partial^\beta_yf
  \]
  is continuous for all multi-indices $\alpha$ and $\beta$. Hence the $\pi$-topology is finer
  than the topology of $\mathcal{D}_{L^1}$. As these topologies are comparable Fr\'{e}chet space 
  topologies on the same vector space they coincide by the closed graph theorem.
\end{proof}

\def\cprime{$'$} \def\cprime{$'$}

\end{document}